\def\C{{\mathbb C}}
\def\J{{\mathcal J}}
\def\C{\mathbb{C}}
\def\G{\mathbb{G}}
\def\N{\mathbb{N}}
\def\TM{\mathbb{T}M}
\def\Z{\mathbb{Z}}
\def\Im{{\rm Im\/}}
\newtheorem{theorem}{Theorem}[section]
\newtheorem{lemma}[theorem]{Lemma}
\newtheorem{definition}[theorem]{Definition}
\newtheorem{proposition}[theorem]{Proposition}
\newenvironment{acknowledgement}[1][Acknowledgements]
{\begin{trivlist} \item[\hskip \labelsep {\bfseries #1}]}
{\end{trivlist}}
\newcommand{\Set}[2]{\left\{#1\bigg\vert #2\right\}}
\title{$E_1$-degeneration and $d'd''$-lemma}
\author{Tai-Wei Chen}
\address{Mathematics Department\\  National Tsing Hua University\\ Hsinchu, Taiwan}
\email{d937203@oz.nthu.edu.tw}
\author{Chung-I Ho}
\address{Mathematics Department\\  National Tsing Hua University\\  National Center of Theoretical Sciences\\ Mathematical Division\\ Hsinchu, Taiwan}
\email{ciho@math.cts.nthu.edu.tw}
\author{Jyh-Haur Teh*}
\address{Mathematics Department\\  National Tsing Hua University\\ Hsinchu, Taiwan}
\email{jyhhaur@math.nthu.edu.tw}
\date{}
\begin{document}

\begin{abstract}
For a double complex $(A, d', d'')$, we show that
if it satisfies the $d'd''$-lemma and the spectral sequence $\{E^{p, q}_r\}$ induced by $A$ does not degenerate at $E_0$, then it degenerates at $E_1$. We apply this result to prove the degeneration at $E_1$ of a Hodge-de Rham spectral sequence on compact bi-generalized Hermitian manifolds that satisfy a version of $d'd''$-lemma.
\\
\\
Keywords: $\partial\overline{\partial}$-lemma, Hodge-de Rham spectral sequence, $E_1$-degeneration, bi-generalized Hermitian manifold.
\end{abstract}

\let\thefootnote\relax\footnotetext{*Corresponding author.}
\subjclass[2010]{55T05, 53C05.}

\maketitle

\section{Introduction}
Complex manifolds that satisfy the $\partial\overline{\partial}$-lemma enjoy some nice properties such as they are formal manifolds(\cite{DGMS}), their Bott-Chern cohomology, Aeppli cohomology
and Dolbeault cohomology are all isomorphic. Compact K\"ahler manifolds are examples of such manifolds. The Hodge-de Rham spectral sequence $E_*^{*,*}$ of a complex manifold $M$ is built from the double complex $(\Omega^{*, *}(M),\partial,\bar{\partial})$ of complex differential forms which relates the Dolbeault cohomology of $M$ to the de Rham cohomology of $M$. It is well known that $E^{p,q}_1$ is isomorphic to $H^p(M,\Omega^q)$ and the spectral sequence $E^{*,*}_r$ converges to $H^*(M,\C)$.
The goal of this paper is to prove an algebraic version of the result that the $\partial\overline{\partial}$-lemma implies the $E_1$-degeneration of a Hodge-de Rham spectral sequence.
The following is our main result.
\begin{theorem}\label{main result}
If a double complex $(A, d', d'')$ satisfies the $d'd''$-lemma and the spectral sequence $\{E^{p, q}_r\}$ induced by $A$ does not degenerate at $E_0$, then it degenerates
at $E_1$.
\end{theorem}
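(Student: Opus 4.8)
The plan is to work with the spectral sequence of the first filtration $F^p A = \bigoplus_{i \ge p} A^{i,*}$, for which $E_0^{p,q} = A^{p,q}$ with $d_0 = d''$, so that $E_1^{p,q}$ is the $d''$-cohomology $H^{p,q}_{d''}(A)$ and $d_1$ is induced by $d'$. The whole argument rests on the standard zig-zag description of the later pages: a class of $E_r^{p,q}$ is represented by an element $a_0 \in A^{p,q}$ with $d'' a_0 = 0$ that can be completed to a chain $a_0, a_1, \dots, a_{r-1}$ with $a_j \in A^{p+j,\, q-j}$ and $d' a_{j-1} + d'' a_j = 0$ for $1 \le j \le r-1$, and then $d_r[a_0] = [d' a_{r-1}]$, the $d''$-cohomology class of $d' a_{r-1}$ in $E_1^{p+r,\, q-r+1}$ (well defined modulo the earlier boundaries). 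First I would recall this description and fix the sign conventions coming from $d'd'' + d''d' = 0$.

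The key step is the observation that, along any such chain, the element $d' a_{r-1}$ falls exactly into the hypothesis of the $d'd''$-lemma. Indeed it is manifestly $d'$-exact and $d'$-closed, and it is $d''$-closed because
\[
d''(d' a_{r-1}) = -d'(d'' a_{r-1}) = d'(d' a_{r-2}) = 0,
\]
using $d'' a_{r-1} = -d' a_{r-2}$ (for $r=1$ one uses $d'' a_0 = 0$ instead). The $d'd''$-lemma, in the form $\ker d' \cap \ker d'' \cap \operatorname{im} d' = \operatorname{im} d'd''$, then gives $d' a_{r-1} = d'd'' c$ for some $c$, whence $d' a_{r-1} = -d''(d' c)$ is $d''$-exact. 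Therefore its class in $E_1$ vanishes.

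I would organize the conclusion as an induction on $r$. For $r=1$ the computation above already shows $d_1[a_0] = [d' a_0] = 0$ for every $d''$-closed $a_0$, so $d_1 = 0$ and $E_2 = E_1$. Assuming $d_1 = \cdots = d_{r-1} = 0$, one has $E_r = E_1$ and every $d''$-closed $a_0$ admits a full zig-zag of length $r-1$ (the successive vanishing of the earlier differentials is precisely what produces $a_1, \dots, a_{r-1}$). The key step then shows $d_r[a_0] = [d' a_{r-1}] = 0$ in $E_1 = E_r$, so $d_r = 0$. Hence $d_r = 0$ for all $r \ge 1$, i.e.\ $E_1 = E_\infty$, which is degeneration at $E_1$; the assumption that the sequence does not degenerate at $E_0$ guarantees $E_0 \ne E_1$, so the degeneration takes place precisely at the first page.

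The main obstacle is bookkeeping rather than conceptual: one must track the signs in the chain relations and verify that the existence of the length-$(r-1)$ zig-zag is equivalent to survival of the class to $E_r$, so that the formula $d_r[a_0] = [d' a_{r-1}]$ is legitimate. A secondary point to address carefully is that showing $[d' a_{r-1}]$ is already zero in $E_1$ suffices to kill $d_r[a_0]$ in $E_r$; this is immediate because $E_r$ is a subquotient of $E_1$ and the differential is induced from the $E_1$-level representative. One should also make explicit that the stated $d'd''$-lemma is equivalent to the implication ``$d'$-exact and $d''$-closed $\Rightarrow$ $d'd''$-exact'' that the argument invokes.
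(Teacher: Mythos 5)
Your proposal is correct, and its heart coincides exactly with the paper's: the central computation in both is that $d'a_{r-1}$ (the element representing $d_r$ of a class carried by a zig-zag $a_0,\dots,a_{r-1}$) lies in $\operatorname{Im}d'\cap\ker d''$, so the $d'd''$-lemma forces it into $\operatorname{Im}d'd''\subset\operatorname{Im}d''$, which is incompatible with a nonzero higher differential. Where you diverge is in the reduction to that computation. The paper does not invoke the classical staircase description; instead it builds a self-contained characterization (Lemmas 2.6--2.9 on the maps $\alpha_{p,q,r}$, $\beta_{p,q,r}$, culminating in Theorem 2.10) showing that the spectral sequence degenerates at $E_r$ but not $E_{r-1}$ if and only if the sets $\mathcal{E}^{p,q}_k$ are empty for $k\geq r$ and nonempty for $k=r-1$ --- and the elements of $\mathcal{E}^{p,q}_r$ are precisely your zig-zags $\xi_0+\cdots+\xi_{r-1}$ with $d\xi=d'\xi_{r-1}\notin\operatorname{Im}d''$, with the extra normalization that $\xi_0$ is not the leading term of a $d$-closed element. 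Your route is shorter but outsources the equivalence ``survival to $E_r$ $\Leftrightarrow$ existence of a zig-zag, with $d_r[a_0]=[d'a_{r-1}]$'' to the standard theory, which is exactly the bookkeeping you flag; the paper's route is longer but proves that equivalence from scratch and yields the sharper ``if and only if'' statement locating the exact page of degeneration, which is also what makes the hypothesis ``does not degenerate at $E_0$'' enter cleanly. One small imprecision to fix in your induction: the mere vanishing of $d_j$ on $E_j$ only says $[d'a_{j-1}]$ dies in $E_j$, which does not by itself produce the next term $a_j$ of the zig-zag for the \emph{same} $a_0$ (one would a priori have to adjust lower terms). What saves you is that your key step proves the stronger $E_1$-level statement that $d'a_{j-1}$ is genuinely $d''$-exact, and it is this, not the vanishing of $d_j$, that extends the zig-zag; the induction should be phrased as ``every zig-zag of length $j$ extends to length $j+1$.'' With that rephrasing your argument is complete.
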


We define a spectral sequence that is analogous to the Hodge-de Rham spectral sequence of complex manifolds for bi-generalized Hermitian manifolds. Applying result above, we are able to show that for compact bi-generalized Hermitian manifolds that satisfy a version of $\partial\overline{\partial}$-lemma, the sequence degenerates at $E_1$.

\begin{acknowledgement}
The authors thank the referee for his/her extremely careful review which largely improves this paper.
\end{acknowledgement}

\section{Degeneration of a Hodge-de Rham spectral sequence}\label{ss1}

\begin{definition}
A spectral sequence is a sequence of differential bi-graded modules $\{(E^{*,*}_r, d_r)\}$ such that $d_r$ is of degree $(r,1-r)$ and $E^{p,q}_{r+1}$ is isomorphic to $H^{p,q}(E^{*,*}_r,d_r)$.
\end{definition}

\begin{definition}
A filtered differential graded module is a $\N$-graded module $A=\oplus_{k=0}^\infty A^k$, endowed with a filtration $F$ and a linear map $d:A\rightarrow A$ satisfying
\begin{enumerate}
\item $d$ is of degree 1: $d(A^k)\subset A^{k+1}$;
\item $d\circ d=0$;
\item the filtered structure is descending:
$$A=F^0A\supseteq F^1A\supseteq\cdots \supseteq F^kA\supseteq F^{k+1}A\supseteq\cdots ;$$
\item the map $d$ preserves the filtered structure: $d (F^kA)\subset F^kA$ for all $k$.
\end{enumerate}

\end{definition}

For $p, q, r\in \Z$, let

$
\begin{array}{cccccc}
   Z^{p, q}_r &= &\Set{\xi\in F^pA^{p+q}}{d \xi \in F^{p+r}A^{p+q+1}}, & Z^{p, q}_\infty & = & F^pA^{p+q}\cap \ker d \\
   B^{p, q}_r &= &F^pA^{p+q}\cap d F^{p-r}A^{p+q-1},  & B^{p, q}_\infty & = & F^pA^{p+q}\cap \Im d\\
  E^{p, q}_r  &= &\frac{Z^{p, q}_r}{Z^{p+1,q-1}_{r-1}+B^{p,q}_{r-1}}, & E^{p, q}_{\infty} & = & \frac{F^pA^{p+q}\cap \ker d}{F^{p+1}A^{p+q}\cap \ker d +F^pA^{p+q}\cap \Im d}
\end{array}$

 with the convention $F^{-k}A^{p+q}=A^{p+q}$ and $A^{-k}=\{0\}$ for $k\geq 0$. Let $d_r: E^{p, q}_r \rightarrow E^{p+r, q-r+1}_r$ be
  the differential induced by  $d: Z^{p, q}_r \rightarrow Z^{p+r, q-r+1}_r$.

Throughout this paper, we always assume that $A=\oplus_{p, q\geq 0} A^{p,q}$ is a double complex of vector spaces over some field
with two maps $d'_{p, q}:A^{p, q}\rightarrow A^{p+1, q}$ and $d''_{p, q}: A^{p,q}\rightarrow A^{p, q+1}$ satisfying
$d'_{p+1, q}d'_{p, q}=0, d''_{p, q+1}d''_{p, q}=0$ and $d'_{p, q+1}d''_{p, q}+d''_{p+1, q}d'_{p, q}=0$ for all $p, q\geq 0$. To make notation cleaner, we allow $p, q$ to be any integers
by defining $A^{p, q}={0}$ for $p<0$ or $q<0$.

Let $A^k=\bigoplus_{p+q=k}A^{p, q}$. Define
$$F^pA^k=\bigoplus_{s=p}^kA^{s, k-s}$$
For $p>k$, define $F^pA^k=\{0\}$.
This gives a descending filtration on $A^k$.

Let $d=d'+d''$.
The double complex $(A, d', d'')$ then defines a filtered differential graded module
$(A, d,F)$. Let $\{E_r^{p,q}\}$ be the corresponding spectral sequence. We are interested in the convergence of $E_r^{p,q}$.

\begin{definition}
Let $\{E^{p,q}_r\}$ be the spectral sequence associated to the double complex $(A, d', d'')$.
If
$d_s=0$ for all $s\geq r$, then we say that $\{E^{p,q}_r\}$ or $A$ degenerates at $E_r$.
\end{definition}

The following simple lemmas will be used frequently.

\begin{lemma}\label{vector2}
If $G'$ is a vector space and $H<G ,H<H'$ are subspaces of $G'$, the natural map
$\varphi: \dfrac{G}{H}\rightarrow \dfrac{G'}{H'}$
is injective if and only if $G\cap H'=H$, and is surjective if and only if $G'=G+H'$.
\end{lemma}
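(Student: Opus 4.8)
The plan is to unwind the definition of the natural map and then compute its kernel and its image directly; everything reduces to elementary linear algebra once the map is pinned down. Since $H\subseteq H'$, the assignment $g+H\mapsto g+H'$ is well-defined on $G/H$: an element $g+H$ with $g\in G\subseteq G'$ has a well-defined class $g+H'$ in $G'/H'$ because enlarging the subgroup we quotient by from $H$ to $H'$ only collapses more. This is the map $\varphi$ in the statement, and it is plainly linear.

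For injectivity, I would identify the kernel. An element $g+H\in G/H$ lies in $\ker\varphi$ exactly when $g+H'=H'$ in $G'/H'$, i.e.\ when $g\in H'$; combined with $g\in G$ this says $g\in G\cap H'$. Noting that $H\subseteq G\cap H'$ always holds (because $H\subseteq G$ and $H\subseteq H'$), we get $\ker\varphi=(G\cap H')/H$. Hence $\varphi$ is injective if and only if this quotient vanishes, that is, if and only if $G\cap H'=H$, which is the first claimed equivalence. For surjectivity, I would compute the image: it is $\{g+H':g\in G\}=(G+H')/H'$ as a subspace of $G'/H'$, and this is all of $G'/H'$ precisely when $G+H'=G'$, giving the second equivalence.

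The only point requiring a moment's care is the well-definedness of $\varphi$ together with the inclusion $H\subseteq G\cap H'$, which is exactly what makes the kernel description clean and forces $H\subseteq H'$ to be a genuine hypothesis rather than a convenience. Beyond that the argument is a routine, diagram-free verification, so I anticipate no serious obstacle; indeed the main role of this lemma is as a convenient bookkeeping device to be invoked repeatedly when comparing the various subquotients $Z^{p,q}_r$, $B^{p,q}_r$, and $E^{p,q}_r$ appearing in the spectral sequence.
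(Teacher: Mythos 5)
Your proof is correct: the kernel computation $\ker\varphi=(G\cap H')/H$ and the image computation $\operatorname{im}\varphi=(G+H')/H'$ are exactly the standard argument, and the paper itself states this lemma without proof (it is offered as one of the "simple lemmas" used for bookkeeping). Nothing further is needed.
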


\begin{lemma}\label{injective and surjective}
Let $p, q, r\in \Z$.
There are inclusions
$$\cdots \subset B^{p, q}_0\subset B^{p, q}_1\subset \cdots\subset B^{p, q}_\infty\subset Z^{p, q}_\infty\subset \cdots \subset Z^{p, q}_1\subset Z^{p, q}_0\subset \cdots,$$
$$Z^{p+1,q-1}_{r-1}\subset Z^{p,q}_r \ \ , \ \  B^{p+1,q-1}_{r+1}\subset Z^{p,q}_r, \ \ d(Z^{p-r, q+r-1}_r)=B^{p, q}_r$$
\end{lemma}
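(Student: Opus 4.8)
The plan is to verify each relation directly from the definitions of the groups $Z^{p,q}_r$ and $B^{p,q}_r$, using only two structural facts: that the filtration is descending, so $F^{s+1}A^k \subseteq F^sA^k$, and that $d \circ d = 0$, so every element of $\Im d$ lies in $\ker d$. No homological machinery is needed; the entire lemma is a matter of matching indices and tracking which step of the filtration each element lands in.

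For the long chain I would first treat the $B$-groups. Writing $B^{p,q}_r = F^pA^{p+q} \cap dF^{p-r}A^{p+q-1}$, I note that increasing $r$ decreases the superscript $p-r$, hence enlarges $F^{p-r}A^{p+q-1}$ and therefore $dF^{p-r}A^{p+q-1}$; intersecting with the fixed space $F^pA^{p+q}$ preserves this, giving $B^{p,q}_r \subseteq B^{p,q}_{r+1}$. Since $dF^{p-r}A^{p+q-1} \subseteq \Im d$, each $B^{p,q}_r$ sits inside $B^{p,q}_\infty = F^pA^{p+q} \cap \Im d$. Dually, for $Z^{p,q}_r = \{\xi \in F^pA^{p+q} : d\xi \in F^{p+r}A^{p+q+1}\}$, increasing $r$ shrinks $F^{p+r}A^{p+q+1}$ and tightens the constraint on $d\xi$, so $Z^{p,q}_{r+1} \subseteq Z^{p,q}_r$; and $Z^{p,q}_\infty = F^pA^{p+q}\cap\ker d$ lies in every $Z^{p,q}_r$ because $d\xi = 0$ belongs to all the $F^{p+r}A^{p+q+1}$. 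The bridging inclusion $B^{p,q}_\infty \subseteq Z^{p,q}_\infty$ follows from $d\circ d = 0$: any $\xi = d\eta$ satisfies $d\xi = 0$.

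For the second line I would substitute indices and compare. The group $Z^{p+1,q-1}_{r-1}$ unwinds to $\{\xi \in F^{p+1}A^{p+q} : d\xi \in F^{p+r}A^{p+q+1}\}$, which sits inside $Z^{p,q}_r$ simply because $F^{p+1}A^{p+q}\subseteq F^pA^{p+q}$ while the condition on $d\xi$ is identical. For $B^{p+1,q-1}_{r+1} = F^{p+1}A^{p+q}\cap dF^{p-r}A^{p+q-1}$, any element lies both in $F^{p+1}A^{p+q}\subseteq F^pA^{p+q}$ and in $\Im d$, so its differential vanishes and it therefore lies in $Z^{p,q}_r$. For the final identity I would prove both inclusions: unwinding $Z^{p-r,q+r-1}_r = \{\eta \in F^{p-r}A^{p+q-1} : d\eta \in F^pA^{p+q}\}$, applying $d$ lands its image in $F^pA^{p+q}\cap dF^{p-r}A^{p+q-1} = B^{p,q}_r$; conversely any $\zeta = d\eta \in B^{p,q}_r$ with $\eta\in F^{p-r}A^{p+q-1}$ automatically has $d\eta = \zeta \in F^pA^{p+q}$, so $\eta \in Z^{p-r,q+r-1}_r$ and $\zeta \in d(Z^{p-r,q+r-1}_r)$.

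The only genuine obstacle is bookkeeping: each relation hinges on the shift $(p,q,r) \mapsto (p\pm 1, q\mp 1, r\mp 1)$ leaving the combinations $p+q$ and $p+r$ invariant in just the right way, and a single sign error in the indices collapses the argument. I expect the equality $d(Z^{p-r,q+r-1}_r) = B^{p,q}_r$ to be the most delicate, since it is the only two-sided statement, and it is precisely this identity that later realizes $d_r$ as the differential induced on $E^{p,q}_r$.
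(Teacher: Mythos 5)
Your proof is correct and is exactly the routine index-by-index verification from the definitions that the paper itself omits (the lemma is stated among the ``simple lemmas'' with no written proof). All the inclusions and the equality $d(Z^{p-r,q+r-1}_r)=B^{p,q}_r$ check out, including the degenerate cases handled by the convention $F^{-k}A^{p+q}=A^{p+q}$.
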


\begin{definition}
Let $\alpha_{p, q, r}:E^{p, q}_{r+1} \rightarrow \frac{Z^{p, q}_r}{Z^{p+1, q-1}_{r-1}+B^{p, q}_r}$ be the map induced by the composition of inclusion and projection, and $\beta_{p, q, r}:E^{p, q}_r \rightarrow \frac{Z^{p, q}_r}{Z^{p+1, q-1}_{r-1}+B^{p, q}_r}$ be the map induced by the projection.
\end{definition}

\begin{proposition}\label{alpha and beta} Let $r\in \Z$. Then
\begin{enumerate}
\item $d_r=0$ if and only if $\beta_{p, q, r}$ is an isomorphism for all $p, q\in \Z$.

\item $d_r=0$ implies that $\alpha_{p, q, r}$ is an isomorphism for all $p, q\in \Z$.
\end{enumerate}
\end{proposition}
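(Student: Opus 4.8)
The plan is to realize both $\alpha_{p,q,r}$ and $\beta_{p,q,r}$ as instances of the natural quotient map studied in Lemma \ref{vector2}, and then translate the vanishing of $d_r$ into the subspace identities that Lemma \ref{vector2} demands for injectivity and surjectivity.

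For part (1), I would first observe that $\beta_{p,q,r}$ is the canonical map $\frac{Z^{p,q}_r}{Z^{p+1,q-1}_{r-1}+B^{p,q}_{r-1}} \to \frac{Z^{p,q}_r}{Z^{p+1,q-1}_{r-1}+B^{p,q}_r}$ obtained by enlarging the denominator, which is legitimate since $B^{p,q}_{r-1}\subset B^{p,q}_r$ by Lemma \ref{injective and surjective}. Such a map is automatically surjective, so by Lemma \ref{vector2} it is an isomorphism exactly when it is injective, i.e. when $Z^{p+1,q-1}_{r-1}+B^{p,q}_r = Z^{p+1,q-1}_{r-1}+B^{p,q}_{r-1}$, equivalently $B^{p,q}_r \subset Z^{p+1,q-1}_{r-1}+B^{p,q}_{r-1}$. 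On the other hand, since $d(Z^{p,q}_r)=B^{p+r,q-r+1}_r$ by Lemma \ref{injective and surjective}, the differential $d_r$ vanishes for all $p,q$ precisely when $B^{p+r,q-r+1}_r \subset Z^{p+r+1,q-r}_{r-1}+B^{p+r,q-r+1}_{r-1}$ for all $p,q$; reindexing $(p+r,q-r+1)\mapsto(p,q)$ turns this into exactly the injectivity condition above, yielding the equivalence in (1).

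For part (2), I would identify $\alpha_{p,q,r}$ as the map $\frac{Z^{p,q}_{r+1}}{Z^{p+1,q-1}_r+B^{p,q}_r}\to\frac{Z^{p,q}_r}{Z^{p+1,q-1}_{r-1}+B^{p,q}_r}$ induced by the inclusion $Z^{p,q}_{r+1}\subset Z^{p,q}_r$. Injectivity of $\alpha$ holds unconditionally: by Lemma \ref{vector2} it amounts to the identity $Z^{p,q}_{r+1}\cap(Z^{p+1,q-1}_{r-1}+B^{p,q}_r)=Z^{p+1,q-1}_r+B^{p,q}_r$, whose nontrivial inclusion follows from a filtration-degree check. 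Indeed, if $\xi=\zeta+d\omega$ with $\zeta\in Z^{p+1,q-1}_{r-1}$, $d\omega\in B^{p,q}_r$ and $d\xi\in F^{p+r+1}A^{p+q+1}$, then $d\zeta=d\xi$ already lies in $F^{p+r+1}A^{p+q+1}$, which forces $\zeta\in Z^{p+1,q-1}_r$.

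The heart of the proof, and the step I expect to be the main obstacle, is the surjectivity of $\alpha$ under the hypothesis $d_r=0$, which by Lemma \ref{vector2} requires $Z^{p,q}_r=Z^{p,q}_{r+1}+Z^{p+1,q-1}_{r-1}+B^{p,q}_r$. Here I would take $\xi\in Z^{p,q}_r$, note that $d\xi\in B^{p+r,q-r+1}_r$, and use $d_r=0$ (in the form established in part (1)) to write $d\xi=\eta+d\omega$ with $\omega\in F^{p+1}A^{p+q}$ and $\eta\in Z^{p+r+1,q-r}_{r-1}$. A careful bookkeeping of filtration levels then shows $\omega\in Z^{p+1,q-1}_{r-1}$ and $\xi-\omega\in Z^{p,q}_{r+1}$, so that $\xi=(\xi-\omega)+\omega$ exhibits the desired decomposition. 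Tracking the filtration indices consistently across these manipulations is the only genuinely delicate point.
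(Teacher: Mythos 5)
Your proof is correct, and while part (1) follows the paper's argument essentially verbatim, your part (2) takes a genuinely different route. For (1) you do exactly what the paper does: view $\beta_{p,q,r}$ as the denominator-enlarging quotient map, use Lemma \ref{vector2} to reduce the isomorphism condition to $B^{p,q}_r\subseteq Z^{p+1,q-1}_{r-1}+B^{p,q}_{r-1}$, and match this against $d(Z^{p-r,q+r-1}_r)=B^{p,q}_r$ from Lemma \ref{injective and surjective}. For (2), the paper instead invokes the standard isomorphism $E^{p,q}_{r+1}\cong H^{p,q}(E^{*,*}_r,d_r)$ (citing McCleary) and observes that when $d_r=0$ this isomorphism fits into a commutative triangle with $\alpha_{p,q,r}$ and $\beta_{p,q,r}$ over $Z^{p,q}_r/(Z^{p+1,q-1}_{r-1}+B^{p,q}_r)$, so that (2) follows formally from (1). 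You verify the two subspace identities of Lemma \ref{vector2} directly: your unconditional injectivity argument ($d\zeta=d\xi\in F^{p+r+1}A^{p+q+1}$ forces $\zeta\in Z^{p+1,q-1}_r$) is sound, and your surjectivity argument correctly uses the part-(1) condition $B^{p+r,q-r+1}_r\subseteq Z^{p+r+1,q-r}_{r-1}+B^{p+r,q-r+1}_{r-1}$ to write $d\xi=\eta+d\omega$ and conclude $\omega\in Z^{p+1,q-1}_{r-1}$ and $\xi-\omega\in Z^{p,q}_{r+1}$; the filtration indices all check out. Your version is more self-contained, avoiding both the external reference and the lightly justified claim that the McCleary isomorphism is compatible with the triangle, at the cost of a longer computation; it also records the extra fact that $\alpha_{p,q,r}$ is always injective, which the paper never states but which explains why only the surjectivity of $\alpha_{p,q,r}$ is at issue later in Lemma \ref{alpha}.
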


\begin{proof}
\begin{enumerate}
\item
We first note that the map $\beta_{p, q, r}$ is always surjective. By Lemma \ref{vector2}, $\beta_{p, q, r}$ is an isomorphism if and only if $Z^{p, q}_r\cap (Z^{p+1, q-1}_{r-1}+B^{p, q}_r)=Z^{p+1, q-1}_{r-1}+B^{p, q}_{r-1}$, or equivalently,
$B^{p, q}_r\subseteq Z^{p+1, q-1}_{r-1}+B^{p, q}_{r-1}$. The map
$d^{p-r, q+r-1}_r:E^{p-r, q+r-1}_r  \rightarrow E^{p, q}_r $ is the zero map
if and only if $\mbox{Im}d^{p-r, q+r-1}_r=\{0\}$. This is equivalent to
$d(Z^{p-r, q+r-1}_r)=B^{p, q}_r\subseteq Z^{p+1, q-1}_{r-1}+B^{p, q}_{r-1}$, which is equivalent
to $\beta_{p, q, r}$ being an isomorphism.

\item
We recall that the isomorphism $E^{p, q}_{r+1} \overset{\cong}{\longrightarrow} H^{p, q}(E^{*, *}_r, d_r)$ (see \cite[Proof of Theorem 2.6]{M}) is induced from some canonical projections and inclusions. If $d_r=0$, $H^{p, q}(E^{*, *}_r, d_r)\cong E^{p, q}_r$ and we have a commutative diagram
$$\xymatrix{ E^{p, q}_{r+1}  \ar[rr]^{\cong} \ar[rd]_{\alpha_{p, q, r}} & &  E^{p, q}_r \ar[ld]^{\beta_{p, q, r}}\\
&  \frac{Z^{p, q}_r}{Z^{p+1,q-1}_{r-1}+B^{p,q}_{r}} &}$$
By (1), $\beta_{p, q, r}$ is an isomorphism and hence $\alpha_{p, q, r}$ is an isomorphism.
\end{enumerate}
\end{proof}

\begin{definition}
Fix a pair of integers $(p, q)$.
For nonzero $\xi=\sum_i \xi_i\in \bigoplus_{i\geq 0}A^{p+i, q-i}$ where $\xi_i\in A^{p+i,q-i}$,
let $i_0=\min_i\{\xi_i\neq 0\}$.
We call $\xi_{i_0}$ the leading term of $\xi$ and denote it as $\ell^{p, q}(\xi)$. We define $\ell^{p, q}(0)=0$.
For $r\geq 1, p, q\in \Z$, let
$$\mathcal{E}^{p,q}_r:=\Set{\xi=\xi_0+\xi_1+\cdots +\xi_{r-1}}{\xi_i\in A^{p+i,q-i}, d\xi=d'\xi_{r-1}\notin \Im d'', \\
 \ell^{p, q}(\eta)\neq\xi_0 \hbox{ for all $d$-closed $\eta$}}$$
and
$$\mathcal{E}^{p, q-1}_0:=B^{p, q}_0-(Z^{p+1, q-1}_{-1}+B^{p, q}_{-1})$$
\end{definition}

\begin{lemma}\label{alpha}
Fix $r_0\geq 1$.
\begin{enumerate}
\item
If the map $\alpha_{p, q, r}$ is an isomorphism for all $p, q\in \Z, r\geq r_0$, then $\mathcal{E}^{p, q}_r=\emptyset$ for all $p, q\in \Z$, $r\geq r_0$.

\item
If the map $\alpha_{p, q, r_0}$ is not an isomorphism, then $\mathcal{E}^{p, q}_{r_0}\neq \emptyset$.
\end{enumerate}
\end{lemma}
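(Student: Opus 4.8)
The plan is to reduce both parts to a single algebraic identity involving the filtration steps $Z^{p,q}_r$. First I would record that $\alpha_{p,q,r}$ is \emph{always} injective: taking $G=Z^{p,q}_{r+1}$, $G'=Z^{p,q}_r$, $H=Z^{p+1,q-1}_r+B^{p,q}_r$ and $H'=Z^{p+1,q-1}_{r-1}+B^{p,q}_r$ in Lemma \ref{vector2} (the needed inclusions $H<G,H<H',G<G'$ come from Lemma \ref{injective and surjective}), injectivity amounts to $Z^{p,q}_{r+1}\cap(Z^{p+1,q-1}_{r-1}+B^{p,q}_r)=Z^{p+1,q-1}_r+B^{p,q}_r$, and the nontrivial inclusion holds because if $\zeta=\eta+b\in Z^{p,q}_{r+1}$ with $\eta\in Z^{p+1,q-1}_{r-1}$ and $b\in B^{p,q}_r\subseteq\ker d$, then $d\eta=d\zeta\in F^{p+r+1}A^{p+q+1}$, forcing $\eta\in Z^{p+1,q-1}_r$. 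Hence $\alpha_{p,q,r}$ is an isomorphism if and only if it is surjective, which by Lemma \ref{vector2} is exactly the identity
\[ Z^{p,q}_r=Z^{p,q}_{r+1}+Z^{p+1,q-1}_{r-1}+B^{p,q}_r. \qquad (\star) \]
The guiding idea is that the two conditions defining $\mathcal{E}^{p,q}_r$ are engineered to detect the failure of $(\star)$: $d'\xi_{r-1}\in\Im d''$ pushes $\xi$ into $Z^{p,q}_{r+1}+Z^{p+1,q-1}_{r-1}$, and $\xi_0$ being the leading term of a cocycle pushes $\xi$ into $Z^{p,q}_\infty+Z^{p+1,q-1}_{r-1}$.

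For part (1) I would first upgrade $(\star)$ by iteration. Substituting $(\star)$ into itself and using $Z^{p+1,q-1}_{s}\subseteq Z^{p+1,q-1}_{r-1}$ for $s\geq r-1$ together with $B^{p,q}_{s}\subseteq B^{p,q}_{s+1}$, an induction on $k$ yields $Z^{p,q}_r=Z^{p,q}_{r+k}+Z^{p+1,q-1}_{r-1}+B^{p,q}_{r+k-1}$ for all $k\geq 1$ and all $r\geq r_0$. Since $A$ is first quadrant, the filtration on each $A^m$ is finite, so $Z^{p,q}_{r+k}=Z^{p,q}_\infty$ and $B^{p,q}_{r+k-1}=B^{p,q}_\infty\subseteq Z^{p,q}_\infty$ for large $k$, giving the clean formula $Z^{p,q}_r=Z^{p,q}_\infty+Z^{p+1,q-1}_{r-1}$. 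Now for any $\xi=\xi_0+\cdots+\xi_{r-1}\in\mathcal{E}^{p,q}_r$ with $r\geq r_0$, the relation $d\xi=d'\xi_{r-1}\in A^{p+r,q-r+1}$ shows $\xi\in Z^{p,q}_r$, while the leading-term condition applied to $\eta=0$ forces $\xi_0\neq 0$. Writing $\xi=c+\eta$ with $c\in Z^{p,q}_\infty$ a cocycle and $\eta\in Z^{p+1,q-1}_{r-1}\subseteq F^{p+1}A^{p+q}$, comparison of $A^{p,q}$-components gives $\ell^{p,q}(c)=\xi_0$, contradicting the definition of $\mathcal{E}^{p,q}_r$. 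Hence $\mathcal{E}^{p,q}_r=\emptyset$.

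For part (2), the failure of $(\star)$ at level $r_0$ for the pair $(p,q)$ supplies $\zeta\in Z^{p,q}_{r_0}$ with $\zeta\notin Z^{p,q}_{r_0+1}+Z^{p+1,q-1}_{r_0-1}+B^{p,q}_{r_0}$. Truncating $\zeta$ to its first $r_0$ homogeneous components gives $\xi=\zeta_0+\cdots+\zeta_{r_0-1}$; the discarded tail lies in $F^{p+r_0}A^{p+q}\cap Z^{p,q}_{r_0}\subseteq Z^{p+1,q-1}_{r_0-1}$, so $\xi$ remains outside the forbidden subspace, while $d\xi=d'\xi_{r_0-1}$. I then verify the three requirements for $\xi\in\mathcal{E}^{p,q}_{r_0}$ by contraposition, each time landing $\xi$ back in that subspace: if $\xi_0=0$ then $\xi\in F^{p+1}A^{p+q}\cap Z^{p,q}_{r_0}=Z^{p+1,q-1}_{r_0-1}$; if $d'\xi_{r_0-1}=d''\mu$ then $\xi-\mu\in Z^{p,q}_{r_0+1}$ whereas $\mu\in Z^{p+1,q-1}_{r_0-1}$; and if $\ell^{p,q}(c)=\xi_0$ for some cocycle $c$ then $c\in Z^{p,q}_\infty\subseteq Z^{p,q}_{r_0+1}$ whereas $\xi-c\in Z^{p+1,q-1}_{r_0-1}$. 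Each case contradicts the choice of $\xi$, so $\xi\in\mathcal{E}^{p,q}_{r_0}$ and the set is nonempty.

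The main obstacle is the degree bookkeeping: every reduction rests on the arithmetic identity $(p+1)+(r-1)=p+r$, which makes ``lying in $F^{p+1}A^{p+q}$ with $d$-image in $F^{p+r}A^{p+q+1}$'' coincide with membership in $Z^{p+1,q-1}_{r-1}$, and on the stabilization of the chains $B^{p,q}_\bullet$ and $Z^{p,q}_\bullet$. The genuinely substantive step is the iteration in part (1) collapsing $(\star)$ to $Z^{p,q}_r=Z^{p,q}_\infty+Z^{p+1,q-1}_{r-1}$, since this is precisely what turns surjectivity of all the $\alpha$'s into the ability to complete $\xi_0$ to an honest cocycle; once that completion is available, the clash with the leading-term condition is immediate.
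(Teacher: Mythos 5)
Your proposal is correct and follows essentially the same route as the paper: both reduce the statement to the surjectivity criterion $Z^{p,q}_r=Z^{p,q}_{r+1}+Z^{p+1,q-1}_{r-1}+B^{p,q}_r$ via Lemma \ref{vector2}, iterate it until $Z^{p,q}_\bullet$ stabilizes at $Z^{p,q}_\infty$ to get the leading-term contradiction in part (1), and in part (2) truncate a witness of non-surjectivity to its first $r_0$ components and verify the three defining conditions of $\mathcal{E}^{p,q}_{r_0}$ by the same case analysis. Your explicit verification that $\alpha_{p,q,r}$ is always injective is a point the paper leaves implicit, but the substance of the argument is identical.
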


\begin{proof}
Note that by Lemma \ref{vector2}, the surjectivity of $\alpha_{p, q, r}$ is equivalent to the condition
$$Z^{p, q}_r=Z^{p, q}_{r+1}+Z^{p+1,q-1}_{r-1}+B^{p,q}_{r}=Z^{p, q}_{r+1}+Z^{p+1,q-1}_{r-1}.$$

\begin{enumerate}
\item
Suppose that $\alpha_{p, q, r}$ is an isomorphism for all $r\geq r_0$. Then $Z^{p, q}_i=Z^{p, q}_{i+1}+Z^{p+1, q-1}_{i-1}$ for all $i\geq r_0$.
Assume that $\mathcal{E}^{p, q}_r\neq \emptyset$ for some $r\geq r_0, p, q\in \Z$. Let $\xi\in \mathcal{E}^{p, q}_r$.
By definition, $Z^{p, q}_{q+2}=Z^{p, q}_{q+3}=\cdots=Z^{p, q}_{\infty}$. So we may take $j>r$ such that $Z^{p, q}_j=Z^{p, q}_{\infty}$. Note that $\xi\in Z^{p, q}_r$.
Using the relation above, we may
write $\xi=\eta_1+\eta_2$ where $\eta_1\in Z^{p, q}_j, \eta_2\in Z^{p+1, q-1}_{j-2}+\cdots +Z^{p+1, q-1}_{r-1}$.
Since $\ell^{p, q}(\xi)\neq 0$, by comparing the degrees of both sides of $\xi=\eta_1+\eta_2$, we have
$\ell^{p, q}(\xi)=\ell^{p, q}(\eta_1)$. But $d\eta_1=0$ which contradicts to the fact that $\ell^{p, q}(\xi)$ is not the leading term of any $d$-closed element.

\item
Fix $r\geq 1$. Suppose that $\alpha_{p, q, r}$ is not an isomorphism, then
$Z^{p, q}_{r+1}+Z^{p+1,q-1}_{r-1}\subsetneqq Z^{p, q}_r$.

Let
$$\xi=\xi_0+\xi_1+\cdots +\xi_k \in Z^{p, q}_r- (Z^{p, q}_{r+1}+Z^{p+1,q-1}_{r-1})\mbox{ where }\xi_i\in A^{p+i,q-i}.$$
If $k>r-1$, let $\xi'=\xi_r+\xi_{r+1}+\cdots+\xi_k\in F^{p+r}A^{p+q}\subset F^{p+1}A^{p+q}$. We have
$$d\xi'=d\xi_r+\cdots +d\xi_k\in F^{p+r}A^{p+q+1}=F^{(p+1)+(r-1)}A^{(p+1)+(q-1)+1}$$
which means that $\xi'\in Z^{p+1, q-1}_{r-1}$. Let $\xi''=\xi-\xi'$. If $\xi''\in Z^{p, q}_{r+1}+Z^{p+1,q-1}_{r-1}$, then
$\xi=\xi'+\xi''\in Z^{p, q}_{r+1}+Z^{p+1,q-1}_{r-1}$ which contradicts to our assumption. Therefore $\xi''=\xi_0+\cdots +\xi_{r-1}\in Z^{p, q}_r-(Z^{p, q}_{r+1}+Z^{p+1,q-1}_{r-1})$.
Hence we may assume $\xi=\xi_0+\cdots +\xi_{r-1}$.

\begin{enumerate}
\item[(i)] Since $\xi\in Z^{p, q}_r$, by definition, $d\xi\in F^{p+r}A^{p+q+1}$.
But $d(\xi_0+\cdots+\xi_{r-2})+d''\xi_{r-1}\in A^{p, q+1}\oplus A^{p+1, q}\oplus \cdots \oplus A^{p+r-1, q-r+2}$. This forces $d(\xi_0+\cdots +\xi_{r-2})+d''\xi_{r-1}=0$ and hence $d\xi=d'\xi_{r-1}$.

\item[(ii)] If $d'\xi_{r-1}=d''\eta_r$ for some $\eta_r\in A^{p+r, q-r}$, then $d(\xi-\eta_r)=d'\xi_{r-1}-d'\eta_r-d''\eta_r=-d'\eta_r\in A^{p+r+1, q-r}\subset F^{p+(r+1)}A^{p+q+1}$.
Hence $\xi-\eta_r\in Z^{p, q}_{r+1}$. Since $\eta_r\in F^pA^{p+q}$ and $d\eta_r\in A^{p+r, q-r+1}\oplus A^{p+r+1, q-r}\subset F^{(p+1)+(r-1)}A^{p+q+1}$, we have $\eta_r\in Z^{p+1, q-1}_{r-1}$.
Therefore $\xi=(\xi-\eta_r)+\eta_r\in Z^{p, q}_{r+1}+Z^{p+1, q-1}_{r-1}$ which is a contradiction. Hence $d'\xi_{r-1}\notin \mbox{Im}d''$.

\item[(iii)] If $\xi_0$ is the leading term of a $d$-closed form $\tau \in F^pA^{p+q}$, then $\xi-\tau \in F^{p+1}A^{p+q}$ and $d(\xi-\tau)=d\xi\in F^{p+r}A^{p+q+1}=F^{(p+1)+(r-1)}A^{p+q+1}$.
Hence $\xi-\tau\in Z^{p+1, q-1}_{r-1}$. Then
$\xi=\tau+(\xi-\tau)\in Z^{p, q}_{\infty}+Z^{p+1,q-1}_{r-1}\subset Z^{p, q}_{r+1}+Z^{p+1,q-1}_{r-1}$ which is a contradiction.
\end{enumerate}
Hence $\xi\in \mathcal{E}^{p,q}_r$.

\end{enumerate}
\end{proof}

\begin{lemma}\label{beta}

\begin{enumerate}
\item $\mathcal{E}^{p, q-1}_0=\emptyset$ if and only if $\beta_{p, q, 0}$ is an isomorphism.

\item
For $r\geq 1$, if $\mathcal{E}^{p-r,q+r-1}_r=\emptyset$, then $\beta_{p,q,r}$ is an isomorphism.

\item
For $r\geq 1$, if $\mathcal{E}^{p-r,q+r-1}_r\neq\emptyset$, then $\beta_{p,q,j}$ is not an isomorphism for $j=1 \mbox{ or } r$.
\end{enumerate}
\end{lemma}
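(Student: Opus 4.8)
The plan is to treat the three parts through a single reformulation of the isomorphism condition for $\beta_{p,q,r}$, which comes out of Lemma~\ref{vector2} exactly as in the proof of Proposition~\ref{alpha and beta}: since $\beta_{p,q,r}$ is the canonical surjection obtained by enlarging the denominator from $Z^{p+1,q-1}_{r-1}+B^{p,q}_{r-1}$ to $Z^{p+1,q-1}_{r-1}+B^{p,q}_r$, it is an isomorphism if and only if $B^{p,q}_r\subseteq Z^{p+1,q-1}_{r-1}+B^{p,q}_{r-1}$. Part~(1) is then immediate: by definition $\mathcal{E}^{p,q-1}_0=B^{p,q}_0-(Z^{p+1,q-1}_{-1}+B^{p,q}_{-1})$ is empty precisely when $B^{p,q}_0\subseteq Z^{p+1,q-1}_{-1}+B^{p,q}_{-1}$, which is the above criterion at $r=0$.

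For part~(2) I would argue contrapositively: assuming $\beta_{p,q,r}$ is not an isomorphism, I construct an element of $\mathcal{E}^{p-r,q+r-1}_r$. By the criterion together with $B^{p,q}_r=d(Z^{p-r,q+r-1}_r)$ from Lemma~\ref{injective and surjective}, there is $\zeta=\sum_i\zeta_i\in Z^{p-r,q+r-1}_r$, with $\zeta_i\in A^{p-r+i,q+r-1-i}$, such that $d\zeta\notin Z^{p+1,q-1}_{r-1}+B^{p,q}_{r-1}$. The clean simplification is that the tail $\zeta'=\zeta_r+\zeta_{r+1}+\cdots$ lies in $F^pA^{p+q-1}$, so $d\zeta'\in F^pA^{p+q}$ merely because $d$ preserves the filtration; hence $\zeta'\in Z^{p-r+1,q+r-2}_{r-1}$ and $d\zeta'\in B^{p,q}_{r-1}$. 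Consequently the truncation $\xi=\zeta_0+\cdots+\zeta_{r-1}$ still satisfies $d\xi\notin Z^{p+1,q-1}_{r-1}+B^{p,q}_{r-1}$, and since $d\xi\in F^pA^{p+q}$ while $\xi$ has no component in bidegree $(p,q-1)$, degree reasons force $d\xi=d'\zeta_{r-1}\in A^{p,q}$. This already yields the first defining condition $d\xi=d'\xi_{r-1}$. For the leading-term condition I note that $\zeta_0\ne0$ and is not the leading term of any $d$-closed $\tau$: otherwise $\xi-\tau\in F^{p-r+1}A^{p+q-1}$ would lie in $Z^{p-r+1,q+r-2}_{r-1}$ and force $d\xi=d(\xi-\tau)\in B^{p,q}_{r-1}$, a contradiction.

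The remaining condition $d'\zeta_{r-1}\notin\Im d''$ is where I expect the real work. If $d'\zeta_{r-1}=d''\eta$ with $\eta\in A^{p,q-1}$, I would write $d\xi=d''\eta=d\eta-d'\eta$; here $d\eta\in B^{p,q}_{r-1}$ because $\eta\in F^pA^{p+q-1}$, while $d'\eta\in A^{p+1,q-1}$ is $d$-closed since $d(d'\eta)=d''d'\eta=-d'd''\eta=-d'd'\zeta_{r-1}=0$, so $d'\eta\in Z^{p+1,q-1}_\infty\subseteq Z^{p+1,q-1}_{r-1}$. Then $d\xi\in Z^{p+1,q-1}_{r-1}+B^{p,q}_{r-1}$, contradicting the choice of $\zeta$, so $\xi\in\mathcal{E}^{p-r,q+r-1}_r$. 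The main obstacle is precisely this filtration and bidegree bookkeeping, together with invoking the sign relation $d'd''+d''d'=0$ at the right moment.

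For part~(3), fix $\xi=\sum_{i=0}^{r-1}\xi_i\in\mathcal{E}^{p-r,q+r-1}_r$, so that $d\xi=d'\xi_{r-1}\in B^{p,q}_r$. If $\beta_{p,q,r}$ is not an isomorphism we are done with $j=r$, so I assume it is one, i.e. $B^{p,q}_r\subseteq Z^{p+1,q-1}_{r-1}+B^{p,q}_{r-1}$, and derive that $\beta_{p,q,1}$ fails. Writing $d'\xi_{r-1}=z+d\omega$ with $z\in Z^{p+1,q-1}_{r-1}$ and $\omega\in F^{p-r+1}A^{p+q-1}$, and comparing $A^{p,q}$-components (where $z$ contributes nothing), gives $d'\xi_{r-1}=d'\omega_{r-2}+d''\omega_{r-1}$, with $\omega_{r-2},\omega_{r-1}$ the $A^{p-1,q}$- and $A^{p,q-1}$-components of $\omega$. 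For $r=1$ this reads $d'\xi_0=d''\omega_0\in\Im d''$, contradicting $d'\xi_0\notin\Im d''$, so $\beta_{p,q,1}$ cannot be an isomorphism. For $r\ge2$, since $d'\xi_{r-1}\notin\Im d''$ we get $d'\omega_{r-2}\notin\Im d''$; moreover $\omega_{r-2}$ is not the leading term of any $d$-closed form, because $d$-closedness would make its $A^{p,q}$-relation read $d'\omega_{r-2}\in\Im d''$. Hence $\omega_{r-2}\in\mathcal{E}^{p-1,q}_1$, and the $r=1$ case just proved shows $\beta_{p,q,1}$ is not an isomorphism, giving $j=1$.
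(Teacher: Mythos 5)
Your parts (1) and (2) are correct and follow essentially the same route as the paper: the criterion ``$\beta_{p,q,r}$ is an isomorphism iff $B^{p,q}_r\subseteq Z^{p+1,q-1}_{r-1}+B^{p,q}_{r-1}$'', the truncation of $\zeta$ to its first $r$ components, the degree comparison giving $d\xi=d'\zeta_{r-1}$, and the three verifications (leading-term condition, $\zeta_0\neq 0$, $d'\zeta_{r-1}\notin\Im d''$) all match the paper's argument for (2), with only cosmetic differences (you phrase the $\Im d''$ step as $d''\eta=d\eta-d'\eta$ where the paper writes $-d''\eta_r=d'\eta_r-d\eta_r$; both rest on $d'\eta$ being $d$-closed).

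In part (3), however, there is a genuine gap for $r\geq 2$. To place $\omega_{r-2}$ in $\mathcal{E}^{p-1,q}_1$ you must verify \emph{all} the defining conditions, and for $r=1$ the condition ``$d\xi=d'\xi_0$'' is exactly the statement $d''\omega_{r-2}=0$. You check only that $d'\omega_{r-2}\notin\Im d''$ and the leading-term condition, and never address $d''\omega_{r-2}$. This is not a formality: your concluding step applies ``the $r=1$ case just proved'' to $\omega_{r-2}$, and that case begins by using $d\omega_{r-2}\in B^{p,q}_1$ to invoke the isomorphism criterion for $\beta_{p,q,1}$. But $B^{p,q}_1\subseteq F^pA^{p+q}$, while $d\omega_{r-2}=d'\omega_{r-2}+d''\omega_{r-2}$ has a component $d''\omega_{r-2}\in A^{p-1,q+1}$ lying outside $F^pA^{p+q}$; so unless $d''\omega_{r-2}=0$, the element $d\omega_{r-2}$ is not in $B^{p,q}_1$ and the argument cannot start (nor is $d'\omega_{r-2}$ by itself visibly in $dF^{p-1}A^{p+q-1}$). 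The paper's proof explicitly records the needed identity (its ``$d''\eta'_{r-1}=0$'') as part of the degree comparison of $d'\eta_{r-1}=\xi'+d\eta'$; note that the $(p-1,q+1)$-component of that equation gives $d'\omega_{r-3}+d''\omega_{r-2}=0$, which yields $d''\omega_{r-2}=0$ immediately when $r=2$ (since $\omega_{-1}=0$) but requires further justification when $r\geq 3$. Your write-up skips this point entirely, so the chain ``$\omega_{r-2}\in\mathcal{E}^{p-1,q}_1\Rightarrow\beta_{p,q,1}$ not an isomorphism'' is not established as written.
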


\begin{proof}
We note that $\beta_{p,q,r}$ is an isomorphism if and only if $B^{p,q}_{r}\subset Z^{p+1,q-1}_{r-1}+B^{p,q}_{r-1}$.
\begin{enumerate}
\item This follows from the definition.

\item
Assume that $\beta_{p,q,r}$ is not an isomorphism. Then there exists $\xi\in B^{p,q}_{r}-(Z^{p+1,q-1}_{r-1}+B^{p,q}_{r-1})$.
So $\xi=d\eta$ for some $\eta\in F^{p-r}A^{p+q-1}$.
Let
$$\eta=\eta_0+\eta_1+\cdots +\eta_k \mbox{ where } \eta_i\in A^{p-r+i,q+r-i-1}.$$
If $k\geq r$, let $\eta'=\eta_r+\cdots +\eta_k\in F^pA^{p+q-1}\subset F^{p-(r-1)}A^{p+q-1}$. Then
$d\eta'\in F^pA^{p+q}\cap d(F^{p-(r-1)}A^{p+q-1})=B^{p, q}_{r-1}$. If $d(\eta-\eta')\in Z^{p+1, q-1}_{r-1}+B^{p, q}_{r-1}$, then
$\xi=d(\eta-\eta')+d\eta'\in Z^{p+1, q-1}_{r-1}+B^{p, q}_{r-1}$ which is a contradiction. So $d(\eta-\eta')\in B^{p, q}_r-(Z^{p+1, q}_{r-1}+B^{p, q}_{r-1})$.
Hence we may assume $\xi=d\eta$ where $\eta=\eta_0+\cdots+\eta_{r-1}$.

\begin{enumerate}
\item[(i)] Comparing the degrees of $\xi$ and $d\eta$, we see that $d\eta=d'\eta_{r-1}$.

\item[(ii)] If $\eta_0=0$, then $\xi=d(\eta_1+\cdots+\eta_{r-1})\in F^pA^{p+q}\cap d(F^{p-(r-1)}A^{p+q-1})=B^{p, q}_{r-1}$ which is a contradiction. So $\eta_0\neq 0$.

\item[(iii)] If $\eta_0$ is the leading term of a $d$-closed form $\eta''$, $\eta-\eta''\in F^{p-r+1}A^{p+q-1}$ and $\xi=d\eta=d(\eta-\eta'')\in d(F^{p-(r-1)}A^{p+q-1})\cap F^pA^{p+q}=B^{p, q}_{r-1}$ which is a contradiction. Hence $\eta_0$ is not the leading term of any $d$-closed form.

\item[(iv)] If $d'\eta_{r-1}\in \Im d''$, $\xi=d\eta=d'\eta_{r-1}=-d''\eta_r$ for some $\eta_r\in A^{p,q-1}$, then $\xi =d'\eta_r-d\eta_r\in Z^{p+1, q-1}_{\infty}+B^{p,q}_{0}\subset Z^{p+1, q-1}_{r-1}+B^{p,q}_{r-1}$ which is a contradiction. Hence $d'\eta_{r-1}\notin \Im d''$.
\end{enumerate}
Therefore, $\eta\in \mathcal{E}^{p-r,q+r-1}_r$.

\item
Assume that $\mathcal{E}^{p-r,q+r-1}_r\neq \emptyset$. Let $\eta=\eta_0+\cdots +\eta_{r-1}\in \mathcal{E}^{p-r,q+r-1}_r$ where $\eta_i\in A^{p-r+i,q+r-i-1}$.
Since $d\eta\in B^{p, q}_r$, if $d\eta\notin Z^{p+1,q-1}_{r-1}+B^{p,q}_{r-1}$, $\beta_{p,q,r}$ is not an isomorphism.
So we may assume $d\eta=d'\eta_{r-1}=\xi'+d\eta'$ where $\xi'\in Z^{p+1, q-1}_{r-1}$ and $d\eta'\in B^{p,q}_{r-1}$. Let $\eta'=\eta_1'+\eta_2'+\cdots+\eta_l'$, where $\eta_i'\in A^{p-r+i,q+r-1-i}$. The degree of $d'\eta_{r-1}$ is $(p, q)$,
so by comparing degrees of both sides of $d'\eta_{r-1}=\xi'+d\eta'$, we get
$$d'\eta_{r-1}=d'\eta'_{r-1}+d''\eta'_r \mbox{ and } d''\eta'_{r-1}=0.$$
If $d'\eta'_{r-1}\in \mbox{Im}d''$,
 then $d'\eta_{r-1}\in \mbox{Im}d''$ which contradicts to the fact that $\eta\in \mathcal{E}^{p-r, q+r-1}_r$. So $d'\eta'_{r-1}\notin\Im d''$.
Note that if $\eta'_{r-1}$ is the leading term of a $d$-closed element $\tau$,
we may write $\tau=\eta'_{r-1}+\tau_r+\cdots +\tau_k$ for some $k>r-1$ and each $\tau_i\in A^{p-r+i, q+r-1-i}$. Then comparing the degrees of $d'\tau=-d''\tau$, we get
$d'\eta_{r-1}=-d''\tau_r$ which contradicts to the fact that $d'\eta_{r-1}\notin\Im d''$.

From the above verification, we see that $\eta'_{r-1}\in \mathcal{E}^{p-1, q}_1$.
Assume that $d\eta'_{r-1}\in Z^{p+1, q-1}_0+B^{p, q}_0$. Write
$d\eta'_{r-1}=\gamma+d\sigma$ where $\gamma=\gamma_1+\gamma_2+\cdots \in Z^{p+1, q-1}_0$,
$\gamma_i\in A^{p+i, q-i}$, $\sigma=\sigma_0+\sigma_1+\cdots \in B^{p, q}_0$ and
$\sigma_i\in A^{p+i, q-1-i}$. Since the degree of $d\eta'_{r-1}$ is $(p, q)$, comparing the degrees of both sides
of $d\eta'_{r-1}=\gamma+d\sigma$, we get $d\eta'_{r-1}=d''\sigma_0$ which
contradicts to the fact that $\eta'_{r-1}\in \mathcal{E}^{p-1, q}_1$. Therefore
$d\eta'_{r-1}\notin Z^{p+1, q-1}_0+B^{p, q}_0$ and hence $\beta_{p, q, 1}$ is not
an isomorphism.
\end{enumerate}
\end{proof}

\begin{theorem}\label{eqdeg}
Suppose that $(A=\oplus_{p, q\geq 0}A^{p, q}, d', d'')$ is a  double complex and $r\geq 1$. The spectral sequence $\{E^{p, q}_r\}$
induced by $A$  degenerates at $E_r$ but not at $E_{r-1}$ if and only if the following conditions hold:
\begin{enumerate}
\item
$\mathcal{E}^{p,q}_k=\emptyset$ for all $p, q\in \Z, k\geq r$ \mbox{ and }
\item
$\mathcal{E}^{p,q}_{r-1}\neq\emptyset$ for some $p, q$.
\end{enumerate}
\end{theorem}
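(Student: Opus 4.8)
The plan is to convert the degeneration statement into the vanishing and non-vanishing of the sets $\mathcal{E}^{p,q}_k$, using Proposition \ref{alpha and beta} to pass between the differentials $d_s$ and the maps $\alpha,\beta$, and Lemmas \ref{alpha} and \ref{beta} to pass between $\alpha,\beta$ and the sets $\mathcal{E}^{p,q}_k$. First I would record the elementary reformulation that, for $r\geq 1$, the spectral sequence degenerates at $E_r$ but not at $E_{r-1}$ if and only if $d_s=0$ for all $s\geq r$ together with $d_{r-1}\neq 0$. This is immediate: $d_s=0$ for $s\geq r$ already forces degeneration at every later page, so once degeneration at $E_r$ holds, the only obstruction to degeneration at $E_{r-1}$ is $d_{r-1}$ itself.

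The technical heart is a single ``master equivalence'': for every $s_0\geq 1$ one has $d_s=0$ for all $s\geq s_0$ if and only if $\mathcal{E}^{p,q}_k=\emptyset$ for all $p,q$ and all $k\geq s_0$. The forward implication runs $d_s=0\Rightarrow \alpha_{p,q,s}$ an isomorphism (Proposition \ref{alpha and beta}(2)) $\Rightarrow \mathcal{E}^{p,q}_k=\emptyset$ (Lemma \ref{alpha}(1) with $r_0=s_0$). The reverse runs $\mathcal{E}^{p,q}_k=\emptyset\Rightarrow \beta_{p,q,s}$ an isomorphism (Lemma \ref{beta}(2), whose hypothesis $\mathcal{E}^{p-s,q+s-1}_s=\emptyset$ is supplied by the global vanishing for every $s\geq s_0$) $\Rightarrow d_s=0$ (Proposition \ref{alpha and beta}(1)). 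Specializing to $s_0=r$ identifies condition (1) with degeneration at $E_r$, which settles ``half'' of the theorem in one stroke.

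It then remains to match condition (2) with $d_{r-1}\neq 0$, and here I would split on $r$. For $r\geq 2$: if $d_{r-1}\neq 0$ then some $\beta_{p,q,r-1}$ fails to be an isomorphism (Proposition \ref{alpha and beta}(1)), and the contrapositive of Lemma \ref{beta}(2) at index $r-1$ yields $\mathcal{E}^{p-r+1,q+r-2}_{r-1}\neq\emptyset$, which is (2); conversely, if $d_{r-1}=0$ then, combined with the already-established $d_s=0$ for $s\geq r$, the master equivalence with $s_0=r-1$ forces $\mathcal{E}^{p,q}_{r-1}=\emptyset$ for all $p,q$, so (2) fails. For $r=1$ the page $E_0$ lies outside the range of Lemma \ref{beta}(2), so I would instead invoke Lemma \ref{beta}(1), giving $\mathcal{E}^{p,q-1}_0=\emptyset \iff \beta_{p,q,0}$ an isomorphism, together with Proposition \ref{alpha and beta}(1), to obtain $d_0\neq 0 \iff \mathcal{E}^{p,q}_0\neq\emptyset$ for some $p,q$. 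Combining the two matchings with the reformulation of the first paragraph proves the equivalence.

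The main obstacle I anticipate is the bookkeeping at the bottom of the filtration. The index $r=1$ genuinely requires the separate Lemma \ref{beta}(1), since the $E_0$ page behaves differently and the shift $p\mapsto p-r$ degenerates there. I would also be careful not to route the implication ``(2)$\Rightarrow d_{r-1}\neq 0$'' through Lemma \ref{beta}(3): that lemma only concludes that $\beta$ fails to be an isomorphism at level $1$ \emph{or} level $r-1$, which is insufficient to pin down $d_{r-1}$ when $r>2$. Passing instead through the contrapositive of the master equivalence is precisely what closes this gap cleanly.
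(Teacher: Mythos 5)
Your proof is correct and follows essentially the same route as the paper's: both directions pass through Proposition \ref{alpha and beta} to translate $d_s=0$ into statements about $\alpha$ and $\beta$, then use Lemma \ref{alpha}(1) and Lemma \ref{beta}(1),(2) to translate those into (non)emptiness of the sets $\mathcal{E}^{p,q}_k$, with the same case split at $r=1$ and the same contradiction argument (via the forward implication applied at level $r-1$) for $r\geq 2$. Your packaging of the two implications as a single ``master equivalence'' indexed by $s_0$, and your explicit warning against routing through Lemma \ref{beta}(3), are merely cleaner bookkeeping of the identical argument.
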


\begin{proof}
Suppose that $\{E^{p, q}_r\}$ degenerates at $E_r$ but not at $E_{r-1}$ for some $r\geq 1$.
By Proposition \ref{alpha and beta}(2), $\alpha_{p, q, i}$ is an isomorphism for all $p, q\in \Z, i\geq r$. Then by Lemma \ref{alpha}, $\mathcal{E}^{p, q}_i=\emptyset$
for all $p, q\in \Z, i\geq r$. Since $d_{r-1}\neq 0$, by Proposition \ref{alpha and beta}(1), there are some $p, q\in \Z$ such that $\beta_{p, q, r-1}$ is not an isomorphism.
Then by Lemma \ref{beta}, $\mathcal{E}^{p-r+1, q+r-2}_{r-1}\neq \emptyset$.

Conversely, suppose that $(1)$ and $(2)$ hold. By Lemma \ref{beta}, $\beta_{p, q, k}$ is an isomorphism for all $p, q\in \Z, k\geq r$.
Then by Proposition \ref{alpha and beta}, $d_k=0$ for $k\geq r$. For the case $r=1$, by definition, $\mathcal{E}^{p, q}_0\neq \emptyset$ implies that $\beta_{p, q+1, 0}$ is not an isomorphism.
And hence by Proposition \ref{alpha and beta}, $d_0\neq 0$. For the case $r\geq 2$, if $\beta_{p, q, r-1}$ is an isomorphism for all $p, q\in \Z$, by Proposition \ref{alpha and beta}, $d_{r-1}=0$.
Then we have $d_k=0$ for $k\geq r-1$. By the proof above, $\mathcal{E}^{p, q}_k=\emptyset$ for $k\geq r-1$. In particular, $\mathcal{E}^{p, q}_{r-1}=\emptyset$ for all $p, q\in \Z$ which contradicts to our assumption (2).
Therefore there exist some $p_0, q_0$ such that $\beta_{p_0, q_0, r-1}$ is not an isomorphism. By Proposition \ref{alpha and beta},
$d_{r-1}\neq 0$.
\end{proof}

\begin{definition}\label{dd-lemma}
We say that a double complex $(A, d', d'')$ satisfies the $d'd''$-lemma at $(p,q)$ if
$$\Im d'\cap \ker d''\cap A^{p,q}=\ker d'\cap \Im d''\cap A^{p,q}=\Im d'd''\cap A^{p,q}$$
and $A$ satisfies the $d'd''$-lemma if $A$ satisfies the $d'd''$-lemma at $(p,q)$ for all $(p,q)$.
\end{definition}

Now we can give a proof of the main result Theorem \ref{main result}.

\begin{proof}
Note that by definition, $d'd''$-lemma implies that $\Im d'\cap \ker d''\cap A^{p, q}=\Im d'\cap \Im d''\cap A^{p, q}$ for all $p, q$.
Since $\{E^{p, q}_r\}$ does not degenerate at $E_0$, $\beta_{p, q, 0}$ is not an isomorphism for some $p, q$, hence by Lemma \ref{beta}, $\mathcal{E}^{p, q-1}_0\neq \emptyset$.
Assume that $\mathcal{E}^{p, q}_r\neq \emptyset$ for some $p, q\in \Z$, $r\geq 1$. Then there is $\alpha=\sum_{i=0}^{r-1}\alpha_i\in \mathcal{E}^{p,q}_r$ where $\alpha_i\in A^{p+i,q-i}$.
From the condition $d\alpha=d'\alpha_{r-1}$, we have $d''\alpha_{r-1}=-d'\alpha_{r-2}$
and hence $d''d\alpha=-d'd''\alpha_{r-1}=0$. So $d\alpha=d'\alpha_{r-1}\in (\Im d'\cap\ker d'')\cap A^{p,q}=(\Im d'\cap \Im d'')\cap A^{p,q}$.
But by the definition of $\mathcal{E}^{p, q}_r$, $d'\alpha_{r-1}\notin \Im d''$ which leads to a contradiction. Therefore by Theorem \ref{eqdeg}, $\{E^{p, q}_r\}$ degenerates at $E_1$.
\end{proof}

In the following, we apply the main result to prove the $E_1$-degeneration of a spectral sequence of bi-generalized Hermitian manifolds.
We refer the reader to \cite{G1, C} for generalized complex geometry, and to \cite{CHT} for bi-generalized complex manifolds.
We give a brief recall here. A bi-generalized complex structure on a smooth manifold $M$ is a pair $(\J_1, \J_2)$ where $\J_1, \J_2$ are commuting generalized complex structures on $M$. A
bi-generalized complex manifold is a smooth manifold $M$ with a bi-generalized complex structure. A bi-generalized Hermitian manifold $(M, \J_1, \J_2, \G)$
is an oriented bi-generalized complex manifold $(M, \J_1, \J_2)$ with a generalized metric $\G$ which commutes with $\J_1$ and $\J_2$.
We define
$$U^{p, q}:=U^p_1\cap U^q_2$$ where $U^p_1, U^q_2\subset \Gamma(\Lambda^*\TM\otimes \C)$ are eigenspaces of $\J_1, \J_2$ associated to the eigenvalues $ip$ and $iq$ respectively
and $\TM=TM\oplus T^*M$ is the generalized tangent space. It can
be shown that the exterior derivative $d$ is an operator from
$U^{p, q}$ to $U^{p+1, q+1}\oplus U^{p+1, q-1}\oplus U^{p-1, q+1} \oplus U^{p-1, q-1}$ and we write
$$\delta_+:U^{p, q} \rightarrow U^{p+1, q+1}, \delta_-:U^{p, q} \rightarrow U^{p+1, q-1}$$
for the projection of $d$ into corresponding spaces.

\begin{definition}
On a bi-generalized Hermitian manifold $M$, there is a double complex $\{(A, d', d'')\}$ given by
$$A^{p, q}:=U^{p+q, p-q}, d'=\delta_+, d''=\delta_-$$
We call the spectral sequence $\{E^{*, *}_*\}$ associated to this double complex the $\partial_1$-Hodge-de Rham spectral sequence.
\end{definition}

By Theorem \ref{main result}, we have the following result.

\begin{theorem}
Suppose that $M$ is a compact bi-generalized Hermitian manifold which satisfies the $\delta_+\delta_-$-lemma and has positive dimension. Then the $\partial_1$-Hodge-de Rham spectral sequence degenerates at $E_1$.
\end{theorem}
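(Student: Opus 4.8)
The plan is to obtain the result as a direct application of Theorem \ref{main result} to the double complex $(A, d', d'') = (U^{p+q,\,p-q}, \delta_+, \delta_-)$ underlying the $\partial_1$-Hodge-de Rham spectral sequence. Two hypotheses must be verified: the $d'd''$-lemma, and that $\{E^{p,q}_r\}$ does not degenerate at $E_0$. The first is immediate, since by definition $d' = \delta_+$ and $d'' = \delta_-$, so the $d'd''$-lemma of Definition \ref{dd-lemma} is precisely the $\delta_+\delta_-$-lemma assumed in the statement. Thus everything reduces to showing non-degeneration at $E_0$, and the role of the hypotheses ``compact'' and ``positive dimension'' is to force exactly that.

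First I would reformulate non-degeneration at $E_0$ concretely. By definition, degeneration at $E_0$ requires $d_r=0$ for all $r\ge 0$, in particular $d_0=0$; so it suffices to show $d_0\neq 0$. A direct computation from the definitions of the filtration and of $Z^{p,q}_r, B^{p,q}_r$ gives the standard identification $E^{p,q}_0\cong A^{p,q}=U^{p+q,p-q}$, under which $d_0$ is the map induced by $d=d'+d''$. Since $d'=\delta_+$ shifts the filtration index, it contributes nothing on the associated graded, so $d_0$ is identified with $d''=\delta_-$. Hence degeneration at $E_0$ would force $\delta_-\equiv 0$, and the whole theorem comes down to proving $\delta_-\not\equiv 0$.

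To prove $\delta_-\not\equiv 0$ I would argue by contradiction. Writing $\overline{\delta_+}$ and $\overline{\delta_-}$ for the projections of $d$ onto $U^{p-1,q-1}$ and $U^{p-1,q+1}$, one has $d=\delta_++\delta_-+\overline{\delta_+}+\overline{\delta_-}$. Complex conjugation interchanges $U^{p,q}$ with $U^{-p,-q}$ and intertwines $\delta_-$ with $\overline{\delta_-}$, so $\delta_-\equiv 0$ forces $\overline{\delta_-}\equiv 0$ as well, leaving $d=\delta_++\overline{\delta_+}$ — an operator whose two summands both shift the $\J_1$-degree and the $\J_2$-degree by the same sign. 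I would then derive a contradiction at the level of principal symbols: for a nonzero covector $\xi\in T^*_xM$ (which exists because $\dim M>0$), the symbol of $d$ is Clifford multiplication by $\xi$, and under the joint $(\J_1,\J_2)$-eigenbundle decomposition of $\TM\otimes\C$ — whose four summands are all nonzero precisely because $\G=-\J_1\J_2$ is a positive-definite generalized metric — this action has a nonzero component raising the $\J_1$-degree and lowering the $\J_2$-degree, which is exactly the symbol of $\delta_-$. Hence $\delta_-\not\equiv 0$, so the sequence does not degenerate at $E_0$, and Theorem \ref{main result} yields degeneration at $E_1$.

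The main obstacle is this last step, ruling out $\delta_-\equiv 0$; the algebraic reduction $d_0\cong\delta_-$ and the invocation of Theorem \ref{main result} are routine. This is where both hypotheses are genuinely used: positive dimension supplies a nonzero covector, while the commuting generalized metric $\G$ guarantees the non-degeneracy of the bi-eigenspace decomposition and hence of the Clifford symbol. As a sanity check on why $\G$ is essential, the degenerate choice $\J_2=\J_1$ does give $\delta_-=0$, but it is excluded exactly because then $\G=-\J_1^2=\mathrm{Id}$ fails to be positive definite. I would expect the cleanest rigorous version of the step to run through the symbol argument above; a Hodge-theoretic alternative, exhibiting on the compact $M$ an explicit form with $\delta_-$-nonzero image, should also work.
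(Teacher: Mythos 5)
Your reduction of the theorem to the single claim $\delta_-\not\equiv 0$ matches the paper exactly: the $d'd''$-lemma hypothesis is the $\delta_+\delta_-$-lemma verbatim, and degeneration at $E_0$ would force $d_0$, which is $\delta_-$ under the identification $E_0^{p,q}\cong A^{p,q}$, to vanish. Where you diverge is in how $\delta_-\not\equiv 0$ is established. The paper's argument is soft: some $U^{p,q}$ is nonzero, hence infinite-dimensional as a $C^\infty(M,\C)$-module on a positive-dimensional manifold; if $\delta_-\equiv 0$ then $H^{p,q}_{\delta_-}(M)=U^{p,q}$, contradicting the finite-dimensionality of $\delta_-$-cohomology on a \emph{compact} bi-generalized Hermitian manifold established in \cite{CHT}. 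Compactness is used essentially there; your symbol argument never invokes it, which is already a warning sign.

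The gap in your route is the key step asserting that Clifford multiplication by a nonzero covector $\xi$ has a nonzero component of bidegree $(+1,-1)$ ``because $\G=-\J_1\J_2$ is a positive-definite generalized metric.'' The paper's definition of a bi-generalized Hermitian manifold does \emph{not} set $\G=-\J_1\J_2$: the metric is independent data, only required to commute with $\J_1$ and $\J_2$. In particular $\J_2=\J_1$ together with any compatible generalized metric satisfies the stated definition; then, writing $L_j$ for the $+i$-eigenbundle of $\J_j$ on $\TM\otimes\C$, the joint eigenbundle $L_1\cap\overline{L_2}$ is zero and $\delta_-\equiv 0$, so your intermediate claim is false at this level of generality. (The theorem survives in that case because the $\delta_+\delta_-$-lemma would force $d=0$, which is impossible on a positive-dimensional manifold --- but that is a different argument from yours, and you would have to supply it.) Even if one adds the generalized K\"ahler hypothesis $\G=-\J_1\J_2$, the inference from ``the four eigenbundles are nonzero'' to ``some real covector has nonzero projection onto $L_1\cap\overline{L_2}$'' is a non sequitur as written; the correct justification is that $T^*M$ is isotropic for the canonical pairing while the $+1$-eigenbundle of $\G$ satisfies $C_+\otimes\C=(L_1\cap L_2)\oplus(\overline{L_1}\cap\overline{L_2})$ and carries a definite pairing, so $T^*M\otimes\C$ cannot project trivially onto $C_-\otimes\C$, and conjugation-invariance then forces a nonzero component in $L_1\cap\overline{L_2}$. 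So either restrict to the setting where $\G=-\J_1\J_2$ and complete the symbol argument along these lines, or follow the paper and use compactness via the finite-dimensionality of $H^{p,q}_{\delta_-}(M)$.
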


Now we give a proof of the $E_1$-degeneration of the $\partial_1$-Hodge-de Rham spectral sequence.
\begin{proof}
Since $\bigoplus_{p, q}U^{p, q}=\Omega^{\bullet}(M)\otimes \C$ (see \cite{C2}, pg 36) where $\Omega^{\bullet}(M)$ is the collection of smooth forms on $M$, some $U^{p, q}$ is not empty. The space $U^{p, q}$ is a $C^{\infty}(M, \C)$-module
where $C^{\infty}(M, \C)$ is the ring of complex-valued smooth functions on $M$, and $M$ has positive dimension, therefore $U^{p, q}$ is an infinite dimensional complex vector space . If $\delta_-$ is a zero map, we have
$H^{p, q}_{\delta_-}(M)=U^{p, q}$ for all $p, q$. But $M$ is compact, this contradicts to the fact that $H^{p, q}_{\delta_-}(M)$ is finite dimensional(\cite[Theorem 2.14, Corollary 3.11]{CHT}). Hence $\delta_-$ is not the zero map.
and the spectral sequence does not degenerate at $E_0$.
Since we assume that $M$ satisfies the $\delta_+\delta_-$-lemma, by Theorem \ref{main result}, the spectral sequence degenerates at $E_1$.
\end{proof}

\bibliographystyle{alpha}

\end{document}